\newcommand{\bdism}{\begin{displaymath}}
\newcommand{\edism}{\end{displaymath}}
\newcommand{\oo}{\mathcal{O}}
\DeclareMathOperator{\supp}{Supp}
\newtheorem{theorem}{Theorem}[section]
\newtheorem{proposition}[theorem]{Proposition}
\newtheorem{corollary}[theorem]{Corollary}
\newtheorem{lemma}[theorem]{Lemma}
\author{\scshape Gabriele Di Cerbo}
\title{\bf A cohomological interpretation of Bogomolov's instability}
\begin{document}
\pagestyle{headings}
\begin{abstract}
We give a new proof of Bogomolov's instability theorem. Furthermore we prove that it is equivalent to a statement which characterizes when the first cohomology group of a suitable divisor does not vanish.
\end{abstract}
\date{December 06, 2011}
\maketitle

\section{Introduction}
\pagenumbering{arabic}
In the theory of stable vector bundles on surfaces the following theorem, known as Bogomolov's instability theorem, plays a central role:
\begin{theorem}[Bogomolov]\label{bog}
Let $X$ be a smooth projective surface and $V$ be a rank 2 vector bundle on $X$. If $c_{1}(V)^{2}>4c_{2}(V)$ then $V$ is unstable.
\end{theorem}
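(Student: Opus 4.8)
The plan is to recast ``$V$ is unstable'' as the existence of a saturated rank-one subsheaf whose associated class sits in the positive cone, and then to produce such a subsheaf by a Riemann--Roch computation. Recall that $V$ is Bogomolov-unstable precisely when there is a saturated line subbundle $L\subset V$, with quotient $\jj_Z\otimes M$ for a finite subscheme $Z$ and a line bundle $M$, such that $\xi:=L-M=2L-c_1(V)$ satisfies $\xi^2>0$ and $\xi\cdot H>0$ for every ample $H$. The first step is to observe that the numerical positivity is essentially automatic: from $c_1(V)=L+M$ and $c_2(V)=L\cdot M+\operatorname{length}(Z)$ one gets $\xi^2=c_1(V)^2-4c_2(V)+4\operatorname{length}(Z)\ge c_1(V)^2-4c_2(V)>0$ for any such subbundle. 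Moreover, once $\xi^2>0$, the Hodge index theorem forces $\xi\cdot H\ne 0$ for every ample $H$ and, the ample cone being connected, the sign of $\xi\cdot H$ is constant; hence it suffices to find one ample $H_0$ and one line subbundle $L$ with $\xi\cdot H_0>0$, i.e.\ one subbundle that destabilizes $V$ in the ordinary slope sense with respect to a single polarization.

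With this reduction in hand, I would fix an ample $H_0$ and try to manufacture a destabilizing $L$ cohomologically. A nonzero section of $V\otimes\oo_X(-L)$ is the same as a nonzero map $\oo_X(L)\to V$, whose image is a line subbundle of slope at least $L\cdot H_0$; so it is enough to find $L$ with $\xi\cdot H_0>0$ and $H^0(V\otimes\oo_X(-L))\ne0$. Riemann--Roch gives, writing $\Delta=c_1(V)^2-4c_2(V)$,
\bdism
\chi(V\otimes\oo_X(-L))=2\chi(\oo_X)+\tfrac14\xi^2+\tfrac14\Delta+\tfrac12\,\xi\cdot K_X,
\edism
so that along any ray with $\xi^2>0$ the Euler characteristic grows quadratically and becomes large and positive. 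The non-vanishing of $H^0$ then follows as soon as the obstruction space can be controlled, and this is exactly the content that the cohomological reformulation is meant to capture: the existence of the destabilizing $L$ ought to be equivalent to the non-vanishing of $H^1$ of a suitable divisor built from $\xi$ and $K_X$, which one reaches through the extension class of $0\to L\to V\to \jj_Z\otimes M\to 0$ in $\mathrm{Ext}^1(\jj_Z\otimes M,L)$.

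The main obstacle is precisely the term $H^2(V\otimes\oo_X(-L))$, which by Serre duality equals $H^0(V\otimes\oo_X(L-c_1(V)+K_X))$: a positive $\chi$ does not by itself yield a section, because the same positivity of $\xi^2$ that drives the growth also tends to feed this dual group, and no single scaling of $\xi$ kills both. Classically one breaks the deadlock with a restriction theorem---restrict $V$ to a general curve in $|mH_0|$ for $m\gg0$, where $\mu_{H_0}$-semistability is preserved and the statement reduces to an elementary bound on a curve---or, equivalently, with a boundedness estimate $h^0(E)=O(\operatorname{rk}E)$ for $\mu_{H_0}$-semistable $E$, which contradicts the growth of $\chi$ after passing to high tensor powers of $\mathcal{E}nd(V)$ (there $c_1=0$ and $\mathrm{ch}_2=n4^{\,n-1}\Delta$). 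I expect this boundedness/restriction input, or else the clean identification of the divisor whose $H^1$ detects instability, to be the real heart of the argument; granting it, one obtains a line subbundle $L$ with $\xi\cdot H_0>0$, and since $\xi^2\ge\Delta>0$ the Hodge index step of the first paragraph promotes this to $\xi\cdot H>0$ for all ample $H$, exhibiting $V$ as unstable.
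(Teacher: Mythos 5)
Your preparatory reductions are sound: for any saturated line subbundle $L\subset V$ with quotient $I_{Z'}\otimes M$ one indeed has $\xi^{2}=c_{1}(V)^{2}-4c_{2}(V)+4\,\mathrm{length}(Z')>0$ automatically, the Hodge index theorem plus connectedness of the ample cone correctly upgrade $\xi\cdot H_{0}>0$ for a single polarization to positivity against every ample class, and your Riemann--Roch formula checks out. But the proposal stops exactly where the theorem begins. The existence of the destabilizing subsheaf is reduced to a ``boundedness/restriction input'' that you grant rather than prove, and this input is not a routine lemma: the boundedness route, $h^{0}(E)=O(\mathrm{rk}\,E)$ for $\mu_{H_{0}}$-semistable $E$ applied to $(V\otimes V^{\vee})^{\otimes n}$, requires knowing that tensor powers of $\mu$-semistable bundles are $\mu$-semistable in characteristic zero, a theorem of essentially the same depth as Bogomolov's inequality itself (it is the engine of the original proofs of Bogomolov and Gieseker); and the restriction-theorem alternative carries a genuine circularity risk, since the effective restriction theorems of Bogomolov type are themselves deduced from the inequality you are trying to prove (Mehta--Ramanathan would avoid the circle, but then its proof must be supplied). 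As written, this is a correct roadmap of the classical argument with its heart left blank, not a proof. You also correctly diagnose the $H^{2}$ obstruction to extracting sections from $\chi>0$, but diagnosing it is not the same as defeating it.

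The paper takes the entirely different, cohomological route that you only gesture at in your parenthetical about ``the divisor whose $H^{1}$ detects instability,'' and it is worth seeing how that route bypasses your missing input. After twisting so that $V$ is globally generated with $\det(V)$ ample and $c_{2}(V)>0$, a general section gives $0\rightarrow \oo_{X}\rightarrow V\rightarrow L\otimes I_{Z}\rightarrow 0$; local freeness of $V$ forces the extension class to be nonzero, hence $H^{1}(X,\oo_{X}(K_{X}+L)\otimes I_{Z})\neq 0$. Blowing up $Z$ converts this to $H^{1}(Y,\oo_{Y}(K_{Y}+\widetilde{L}))\neq 0$ with $\widetilde{L}^{2}=c_{1}^{2}(V)-4c_{2}(V)>0$, and then Theorem \ref{sak} --- whose proof rests only on vanishing theorems via the Zariski decomposition, not on any moduli-theoretic boundedness --- produces the divisor $\widetilde{E}$. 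A Hodge-index argument shows $E:=\pi_{*}\widetilde{E}$ meets $Z$, the uniform multiplicity lemma of Fern\'andez del Busto lets one arrange $Z\subset\supp(E)$, the vanishing of Corollary \ref{cor} lifts $\oo_{X}(L-E)\rightarrow \oo_{X}(L)\otimes I_{Z}$ to an injection $\oo_{X}(L-E)\rightarrow V$, and the two inequalities $(L-2E)^{2}>0$ and $(L-2E)\cdot L>0$ follow from the Chern class computation together with Proposition \ref{pro}. So to complete your proposal along your own lines you must prove the tensor-power semistability (or a restriction) theorem from scratch; along the paper's lines, the idea you are missing is to exploit the nontriviality of the extension class to get nonvanishing of $H^{1}$ and then let Theorem \ref{sak} construct the destabilizing divisor directly.
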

For the original proof we refer to \cite{Bog}, see also \cite{Reid}. This theorem was later proved by quite different techniques in \cite{Gie} and \cite{Miy}. Furthermore Reider used Theorem \ref{bog} to study adjoint linear series on surfaces and to derive his famous theorem, \cite{Rei}. \\
The first cohomological proof of Reider's theorem was given by Sakai in \cite {Sak}. His proof uses ideas of Serrano \cite{Ser}, and generalizes Reider's theorem to normal surfaces. The key point in Sakai's proof is the following theorem.
\begin{theorem}[Sakai]\label{sak}
Let $D$ be a big divisor with $D^{2}>0$ on a smooth projective surface $X$. If $H^{1}(X,\oo_{X}(K_{X}+D))\neq 0$ then there exists an effective divisor $E$ such that 
\begin{enumerate}
\item $D-2E$ is big;
\item $(D-E)\cdot E\leq 0$.
\end{enumerate}
\end{theorem}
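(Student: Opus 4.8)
The plan is to reinterpret the cohomological hypothesis as the existence of a rank $2$ bundle to which Bogomolov's theorem applies, and then to read off $E$ from the resulting destabilizing subsheaf. First I would apply Serre duality on the surface: since $\omega_X=\oo_X(K_X)$, one has $H^1(X,\oo_X(K_X+D))\cong H^1(X,\oo_X(-D))^{\vee}$, so the hypothesis is equivalent to $H^1(X,\oo_X(-D))\neq 0$. Because $H^1(X,\oo_X(-D))\cong\mathrm{Ext}^1(\oo_X(D),\oo_X)$, a nonzero class produces a nonsplit extension
\bdism
0\to\oo_X\to V\to\oo_X(D)\to 0,
\edism
with $V$ a rank $2$ vector bundle. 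A Chern class computation gives $c_1(V)=D$ and $c_2(V)=0$, so $c_1(V)^2=D^2>0=4c_2(V)$ by hypothesis, and Theorem \ref{bog} shows $V$ is unstable.

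Next I would unwind what instability provides: a saturated rank one subsheaf $\oo_X(A)\hookrightarrow V$ with $(2A-D)^2>0$ and $(2A-D)\cdot H>0$ for an ample divisor $H$. Since the subsheaf is saturated, the quotient is torsion-free of rank one, hence of the form $\oo_X(D-A)\otimes\jj_Z$ for a zero-dimensional subscheme $Z$, giving
\bdism
0\to\oo_X(A)\to V\to\oo_X(D-A)\otimes\jj_Z\to 0.
\edism
To produce an effective divisor I would compose $\oo_X(A)\hookrightarrow V$ with the projection $V\to\oo_X(D)$. If this composite vanished, then $\oo_X(A)$ would factor through $\oo_X$, forcing $-A$ to be effective and hence $A\cdot H\le 0$; but then $(2A-D)\cdot H\le -D\cdot H<0$ (as $D$ big gives $D\cdot H>0$), contradicting instability. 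So the composite is nonzero, and $E:=D-A$ is effective.

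It remains to verify the two conclusions. Rewriting the instability inequalities with $A=D-E$ gives $(D-2E)^2>0$ and $(D-2E)\cdot H>0$; a divisor with positive self-intersection and positive intersection against an ample class is big by Riemann--Roch, which is (1). For (2) I would use the second Chern class: from the displayed sequence $c_2(V)=(D-E)\cdot E+\mathrm{length}(Z)$, and since $c_2(V)=0$ this gives $(D-E)\cdot E=-\mathrm{length}(Z)\le 0$, which is (2).

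The step I expect to be the main obstacle is setting up and invoking Bogomolov's theorem in exactly the right form: guaranteeing that the destabilizing subsheaf may be taken saturated, so that the cokernel is twisted by an honest ideal sheaf of a zero-dimensional $Z$, and fixing the sign conventions so that $2A-D$ genuinely lands in the positive cone. Once that normalization is in place, the passage from the numerical data to conditions (1) and (2) is routine bookkeeping with Chern classes together with the short case analysis on whether the composite $\oo_X(A)\to\oo_X(D)$ vanishes.
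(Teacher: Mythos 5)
Your argument is correct, but it takes a genuinely different route from the paper's own proof: it is in fact the direction the paper deliberately cites without reproving (``Theorem \ref{sak} can be easily derived from Theorem \ref{bog}'', following \cite{Sak} p.~307). The paper's proof in Section 2 is Bogomolov-free and entirely cohomological: it takes the Zariski decomposition $D=P+N$, applies Sakai's lemma (Example 9.4.12 in \cite{Laz}) to get $H^{1}(X,\oo_{X}(K_{X}+D-\lfloor N\rfloor))=0$, and then adds back the components $E_{j}$ of $N$ one at a time, collecting those with positive intersection; the accumulated difference $E=D-D_{k}$ is the desired divisor. This distinction is not cosmetic here: the paper's main theorem derives Bogomolov \emph{from} Sakai's theorem, so inside this paper your derivation would be circular, though as a standalone proof it is perfectly legitimate since Bogomolov's theorem has independent proofs (\cite{Bog}, \cite{Gie}, \cite{Miy}). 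What each route buys: yours is shorter and makes the equivalence of the two statements transparent (Serre duality plus extension classes is exactly the dictionary between the cohomological hypothesis and rank-2 bundles); the paper's route is what makes the equivalence contentful, and it also yields for free the companion vanishing $H^{1}(X,\oo_{X}(K_{X}+D-E))=0$ (Corollary \ref{cor}), which is needed later in the main theorem and which your extension-theoretic argument does not obviously produce.

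Two points you should make explicit. First, as you anticipated, you need Bogomolov instability in its strong form: the saturated destabilizing sub-line bundle $\oo_{X}(A)$ has $2A-D$ in the positive cone, i.e.\ $(2A-D)^{2}>0$ \emph{and} $(2A-D)\cdot H>0$; slope instability with respect to a single polarization would not give $(2A-D)^{2}>0$, which is what makes $D-2E$ big. This strong form is the standard statement and is also what the paper implicitly proves in the converse direction. Second, you should observe that $E\neq 0$: if $E=D-A\sim 0$, the nonzero composite $\oo_{X}(A)\to\oo_{X}(D)$ would be an isomorphism and would split the extension, contradicting nonsplitness. This is worth a sentence, since with $E=0$ conclusions (1) and (2) hold vacuously, and it is precisely at this point that the hypothesis $H^{1}(X,\oo_{X}(K_{X}+D))\neq 0$ actually enters your argument.
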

As shown in \cite{Sak} Theorem \ref{sak} can be easily derived from Theorem \ref{bog}. Moreover Sakai gave an alternative proof based on Miyaoka's vanishing theorem for the Zariski decomposition of a divisor. Later Ein and Lazarsfeld show how to apply the Kawamata-Viehweg vanishing theorem to prove a part of Reider's theorem in \cite{Ein}. Based on these new techniques Fern\'andez del Busto gave an elegant proof of Bogomolov's inequality which uses only the Kawamata-Viehweg theorem, see \cite{Fer}. For a survey on these results we refer to \cite{Laz1}.\\
On the other hand, Mumford shows that we can use Bogomolov's theorem for rank 2 vector bundles to give a short proof of a generalized Kodaira vanishing for surfaces, see \cite{Fri}. This vanishing theorem is a little less general than the theorem of Kawamata-Viehweg. These results suggest that there should be a connection between Bogomolov's instability and some vanishing theorem. \\
In this note we prove 
\begin{theorem}\label{teo}
Bogomolov's instability theorem is equivalent to Theorem \ref{sak}.
\end{theorem}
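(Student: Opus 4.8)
The plan is to prove the equivalence by establishing the two implications separately, using the Serre-duality identity $H^{1}(X,\oo_{X}(K_{X}+D))\cong\mathrm{Ext}^{1}(\oo_{X}(D),\oo_{X})$ as the bridge that turns the cohomology of a divisor into a rank $2$ bundle. The implication Theorem~\ref{bog} $\Rightarrow$ Theorem~\ref{sak} is essentially Sakai's own derivation \cite{Sak}, which I would recall to fix the dictionary: given $D$ big with $D^{2}>0$ and $H^{1}(X,\oo_{X}(K_{X}+D))\neq 0$, Serre duality gives $H^{1}(X,\oo_{X}(-D))\neq 0$, and a nonzero class in $\mathrm{Ext}^{1}(\oo_{X}(D),\oo_{X})$ yields a nonsplit extension
\[
0\to\oo_{X}\to V\to\oo_{X}(D)\to 0 .
\]
Then $c_{1}(V)=D$ and $c_{2}(V)=0$, so $c_{1}(V)^{2}=D^{2}>0=4c_{2}(V)$ and Theorem~\ref{bog} makes $V$ unstable. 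Writing the destabilizing sequence $0\to\oo_{X}(A)\to V\to\jj_{Z}\otimes\oo_{X}(B)\to 0$ with $A+B=D$ and $A\cdot B=c_{2}(V)-\mathrm{length}(Z)\leq 0$, I would set $E=B$; since the sub-line-bundle cannot factor through $\oo_{X}$ (that would force $A\leq 0$, contradicting bigness of $A-B=2A-D$), the divisor $E$ is effective, $D-2E=A-B$ is big, and $(D-E)\cdot E=A\cdot B\leq 0$, which is exactly the conclusion of Theorem~\ref{sak}.

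For the converse, Theorem~\ref{sak} $\Rightarrow$ Theorem~\ref{bog}, let $V$ be a rank $2$ bundle with $c_{1}(V)^{2}>4c_{2}(V)$. Since twisting $V$ by a line bundle leaves both $c_{1}(V)^{2}-4c_{2}(V)$ and Bogomolov-instability unchanged, I would first normalize $c_{1}(V)$. The strategy is to produce a divisor $D$ with $D^{2}>0$ that is big and satisfies $H^{1}(X,\oo_{X}(K_{X}+D))\neq 0$, feed it into Theorem~\ref{sak} to obtain an effective $E$, and then translate $E$ back through the extension dictionary into a destabilizing sub-line-bundle $\oo_{X}(A)\hookrightarrow V$ with $2A-c_{1}(V)$ big. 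The engine for the nonvanishing would be Riemann--Roch: the inequality $c_{1}(V)^{2}>4c_{2}(V)$ forces $\chi$ of the self-dual twists $\mathrm{Sym}^{2n}V\otimes(\det V)^{-n}$ to grow like $n^{3}\big(c_{1}(V)^{2}-4c_{2}(V)\big)$, which after the dictionary above should manufacture the required $H^{1}\neq 0$ for a divisor $D$ built from $V$ with $D^{2}\geq c_{1}(V)^{2}-4c_{2}(V)>0$. Reading Sakai's output back shows $V$ is unstable.

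The main obstacle is precisely this last bridge. Theorem~\ref{sak} is a statement about a single divisor on $X$, whereas Bogomolov's theorem concerns an \emph{arbitrary} rank $2$ bundle, which need not present itself as an extension of line bundles with trivial sub-bundle. The two delicate points are (i) manufacturing the nonvanishing $H^{1}$ hypothesis of Theorem~\ref{sak} directly from the Chern-class inequality while simultaneously securing the positivity hypotheses ($D$ big and $D^{2}>0$), and (ii) matching the effective divisor $E$ produced by Sakai to an honest destabilizing subsheaf of $V$, keeping track of the zero-dimensional scheme $Z$ through the relation $(A-B)^{2}=c_{1}(V)^{2}-4c_{2}(V)+4\,\mathrm{length}(Z)$. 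I expect the Serre-duality identity to carry the weight of both directions, since it is exactly what converts the cohomology of a divisor into the rank $2$ bundle on which Bogomolov's dichotomy operates; making this translation reversible, rather than one-way as in Sakai's derivation, is the heart of the proof.
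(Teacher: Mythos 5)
Your forward implication (Theorem \ref{bog} $\Rightarrow$ Theorem \ref{sak}) is essentially correct and is exactly Sakai's derivation, which the paper also simply cites to \cite{Sak}; so the burden of the theorem lies entirely in the converse, and there your proposal has a genuine gap rather than a proof. The engine you propose --- Riemann--Roch on $\mathrm{Sym}^{2n}V\otimes(\det V)^{-n}$ --- is the engine of Bogomolov's \emph{original} argument: the growth of $\chi$ only yields sections of symmetric powers after one controls $H^{2}$ by duality, and what it then produces is instability of $V$ directly, not the hypothesis of Theorem \ref{sak}. At no point does your sketch construct a big divisor $D$ with $D^{2}>0$ and $H^{1}(X,\oo_{X}(K_{X}+D))\neq 0$; moreover the ``dictionary'' cannot be reversed the way you hope, since an extension $0\to\oo_{X}\to V\to\oo_{X}(D)\to 0$ has $c_{2}=0$ and so never recovers an arbitrary $V$. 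You correctly flag (i) manufacturing the nonvanishing and (ii) matching $E$ to a destabilizing subsheaf as the delicate points, but flagging them is where the proposal stops.

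The paper's resolution of (i) is quite different and costs no Riemann--Roch at all: after twisting so that $V$ is globally generated, $\det V$ ample and $c_{2}(V)>0$, a general section $s$ gives $0\to\oo_{X}\to V\to L\otimes I_{Z}\to 0$ with $L=\det V$ and $|Z|=c_{2}(V)$, and local freeness of $V$ forces the extension class to be nonzero, i.e. $H^{1}(X,\oo_{X}(K_{X}+L)\otimes I_{Z})\neq 0$ --- the nonvanishing comes for free from the geometry of $V$ itself. The ideal sheaf is then traded for an honest divisor by applying Theorem \ref{sak} \emph{on the blow-up} $\pi:Y\to X$ of $Z$ (the theorem holds on every smooth projective surface, so one may change the surface): with $\widetilde{L}=\pi^{*}L-2\sum_{j}E_{j}$ one has $H^{1}(Y,\oo_{Y}(K_{Y}+\widetilde{L}))\neq 0$, $\widetilde{L}^{2}=c_{1}^{2}(V)-4c_{2}(V)>0$, and $\widetilde{L}$ big by Lemma \ref{lem}. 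Point (ii) then requires a further chain absent from your sketch: a Hodge-index computation showing each $E_{s}=\pi_{*}\widetilde{E}_{s}$ passes through some point of $Z$; the uniform multiplicity lemma of \cite{Fer} to choose $s$ and $E_{s}$ generally so that $Z\subset\supp(E_{s})$; the vanishing of Corollary \ref{cor} to lift the multiplication map $\oo_{X}(L-E)\to\oo_{X}(L)\otimes I_{Z}$ to an injection $\oo_{X}(L-E)\hookrightarrow V$; and finally Proposition \ref{pro} with $\alpha=4c_{2}(V)$, giving $L\cdot E<2c_{2}(V)$ and hence $(L-2E)^{2}>0$ and $(L-2E)\cdot L>0$, which is the instability of $V$. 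These constructions --- the section-plus-blow-up mechanism for (i) and the uniform-multiplicity and lifting mechanism for (ii) --- are the missing heart of the converse, and without them the proposal does not close.
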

Furthermore, using Sakai's proof of Theorem \ref{sak}, one gets a new proof of Bogomolov's instability theorem which is entirely cohomological.\\ 
We now outline the proof of Theorem \ref{teo}. After twisting the vector bundle $V$ with a line bundle we can assume that $V$ has a global section. Using this section we have that the extension class of the vector bundle is a nontrivial since $V$ is locally free. The first step of our proof follows Fern\'andez del Busto's argument \cite{Fer}. At this point we follow a different strategy. The numerical condition of Bogomolov's inequality allows us to apply Theorem \ref{sak} and we show directly that the divisor $E$ gives the destabilizing subsheaf.

\section{Preliminaries}
For the convenience of the reader we sketch the proof of Theorem \ref{sak}.
\begin{proof}
Let $D=P+N$ be the Zariski decomposition of $D$ and write $N=\sum \alpha_{j}E_{j}$ with each $\alpha_{j}$ positive and rational. By Sakai's lemma, Example 9.4.12 in \cite{Laz}, we know that $H^{1}(X,\oo_{X}(K_{X}+D-\left\lfloor N\right\rfloor))=0$ so $\left\lfloor N\right\rfloor >0$. Consider the following sequence of divisors: 
\bdism
D_{0}=D-\left\lfloor N\right\rfloor, \dots, D_{k}=D_{k-1}+E_{j_{k}},\dots,D_{n}=D.
\edism
If $D_{k}\cdot E_{j_{k}}>0$ for any $k$, we get the vanishing of $H^{1}(X,\oo_{X}(K_{X}+D))$. Thus we can collect all the $E_{j_{k}}$'s with positive intersection to construct a sequence $D_{0},\dots,D_{k}$ such that $(D-D_{k})\cdot E_{j}\leq 0$ for all irreducible components $E_{j}$ of $D-D_{k}$. Now a computation shows that $E:=D-D_{k}$ is the required divisor. 
\end{proof}
\begin{corollary}\label{cor}
Let $D$ and $E$ be as above then 
\bdism
H^{1}(X,\oo_{X}(K_{X}+D-E))=0.
\edism
\end{corollary}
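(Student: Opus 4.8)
The plan is to recognize the corollary as nothing more than the vanishing already produced inside the proof of Theorem \ref{sak}, read off at the divisor where the construction stops. First I would note that, by the definition $E:=D-D_{k}$, we have $D-E=D_{k}$, so the asserted
\bdism
H^{1}(X,\oo_{X}(K_{X}+D-E))=0
\edism
is exactly $H^{1}(X,\oo_{X}(K_{X}+D_{k}))=0$. It therefore suffices to show that the terminal divisor $D_{k}$ of the construction still has vanishing first cohomology after twisting by $K_{X}$.

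Next I would unwind how $D_{k}$ is reached. It is obtained from $D_{0}=D-\left\lfloor N\right\rfloor$ through the subchain $D_{0},D_{1},\dots,D_{k}$ built out of exactly those additions $D_{i}=D_{i-1}+E_{j_{i}}$ that were collected for having positive intersection. The mechanism driving the proof of Theorem \ref{sak} is that each such step preserves the vanishing of $H^{1}(K_{X}+\,\cdot\,)$: the restriction sequence
\bdism
0\to\oo_{X}(K_{X}+D_{i-1})\to\oo_{X}(K_{X}+D_{i})\to\oo_{E_{j_{i}}}(K_{X}+D_{i})\to 0
\edism
yields the exact piece
\bdism
H^{1}(X,\oo_{X}(K_{X}+D_{i-1}))\to H^{1}(X,\oo_{X}(K_{X}+D_{i}))\to H^{1}(E_{j_{i}},\oo_{E_{j_{i}}}(K_{X}+D_{i})),
\edism
and the rightmost group vanishes. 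Indeed, by adjunction $\oo_{E_{j_{i}}}(K_{X}+E_{j_{i}})=\omega_{E_{j_{i}}}$, so Serre duality on the integral curve $E_{j_{i}}$ identifies this $H^{1}$ with the dual of the sections of a line bundle of degree $-(D_{i-1}\cdot E_{j_{i}})$, hence negative precisely because this step was collected for its positive intersection.

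I would then close the argument by induction along the subchain. Since $H^{1}(X,\oo_{X}(K_{X}+D_{0}))=0$ by Sakai's lemma (Example 9.4.12 in \cite{Laz}), the step-by-step preservation above propagates the vanishing through $D_{1},\dots,D_{k}$, giving $H^{1}(X,\oo_{X}(K_{X}+D_{k}))=0$, which is the claim.

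The only delicate point—and the sole obstacle—is the curve-level vanishing when $E_{j_{i}}$ is singular: the computation must be phrased with the dualizing sheaf $\omega_{E_{j_{i}}}$ in place of an honest canonical divisor, and one must confirm that a line bundle of negative degree on an integral projective curve has no nonzero global sections. Granting this standard fact, the inductive propagation of the vanishing is immediate, and the corollary follows.
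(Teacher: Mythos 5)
Your proposal is correct and takes essentially the same route as the paper: the paper's proof of Corollary \ref{cor} consists precisely of your observation that $D-E=D_{k}$, the remark that ``by the above construction'' $H^{1}(X,\oo_{X}(K_{X}+D_{0}))=H^{1}(X,\oo_{X}(K_{X}+D_{k}))$, and an appeal to Sakai's lemma at $D_{0}$. Your restriction-sequence and Serre-duality computation simply makes explicit the propagation mechanism the paper leaves implicit (note only that the collected condition is $D_{i}\cdot E_{j_{i}}>0$, which yields your required $D_{i-1}\cdot E_{j_{i}}>0$ since $E_{j_{i}}^{2}<0$ for components of the negative part $N$).
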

\begin{proof}
By the above construction
\bdism
H^{1}(X,K_{X}+D_{0})=H^{1}(X,K_{X}+D_{k}).
\edism
Since $D_{0}=D-\left\lfloor N\right\rfloor$ and $D_{k}=D-E$, the result follows from Sakai's lemma.
\end{proof}
In conclusion we recall two results which will be used in the proof of the main theorem. 
\begin{lemma}\label{lem}
Let $f:Y\rightarrow X$ be a birational morphism between smooth projective surfaces and $\widetilde{L}$ a divisor on $Y$. Set $L:=f_{*}\widetilde{L}$, if $\widetilde{L}^{2}>0$ and $L$ is big then $\widetilde{L}$ is big.
\end{lemma}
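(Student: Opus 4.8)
The plan is to reduce the statement to the following standard dichotomy on a smooth projective surface: if a divisor $M$ satisfies $M^{2}>0$, then either $M$ or $-M$ is big. Granting this, it suffices to apply it to $M=\widetilde{L}$ and to rule out the possibility that $-\widetilde{L}$ is big. To establish the dichotomy itself I would run asymptotic Riemann--Roch: since $\chi(\oo_{Y}(m\widetilde{L}))=\chi(\oo_{Y})+\tfrac{1}{2}m\widetilde{L}\cdot(m\widetilde{L}-K_{Y})$ has leading term $\tfrac{1}{2}m^{2}\widetilde{L}^{2}>0$, Serre duality gives $h^{0}(m\widetilde{L})+h^{0}(K_{Y}-m\widetilde{L})\geq\chi(\oo_{Y}(m\widetilde{L}))\to+\infty$. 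Fixing an ample $H'$ on $Y$, the Hodge index theorem forbids $\widetilde{L}\cdot H'=0$ (that would force $\widetilde{L}^{2}\leq 0$), so for $m\gg0$ only one of the two groups can be nonzero, and the corresponding divisor grows quadratically, hence is big.

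The core of the argument is then to show $-\widetilde{L}$ is not big, and here I would bring in the hypothesis that $L=f_{*}\widetilde{L}$ is big. Choose an ample divisor $H$ on $X$; since $f$ is birational, $f^{*}H$ is nef and big on $Y$. The projection formula yields
\bdism
\widetilde{L}\cdot f^{*}H=(f_{*}\widetilde{L})\cdot H=L\cdot H,
\edism
and because $L$ is big and $H$ is ample this number is strictly positive. Suppose for contradiction that $-\widetilde{L}$ is big, so that numerically $-\widetilde{L}\equiv A+E$ with $A$ an ample $\qq$-divisor and $E$ an effective $\qq$-divisor. Intersecting with the nef class $f^{*}H$ gives $(-\widetilde{L})\cdot f^{*}H=A\cdot f^{*}H+E\cdot f^{*}H$; the first term is positive because an ample class meets the nonzero nef class $f^{*}H$ strictly positively (by the Hodge index theorem, or Kleiman's criterion), and the second is nonnegative since $E$ is effective and $f^{*}H$ nef. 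Hence $\widetilde{L}\cdot f^{*}H<0$, contradicting $\widetilde{L}\cdot f^{*}H=L\cdot H>0$.

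Combining the two steps, $\widetilde{L}^{2}>0$ forces $\widetilde{L}$ or $-\widetilde{L}$ to be big, and the projection-formula computation rules out $-\widetilde{L}$, so $\widetilde{L}$ is big. The main obstacle I anticipate is the bookkeeping of positivity in the second step, specifically verifying that $f^{*}H$, though only nef and big rather than ample, still meets an ample class strictly positively and an effective class nonnegatively. This is exactly where the hypothesis that $L$ is big is genuinely used, since $\widetilde{L}^{2}>0$ alone cannot distinguish $\widetilde{L}$ from $-\widetilde{L}$.
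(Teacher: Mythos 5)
Your proof is correct. Note that the paper itself offers no argument here: its ``proof'' of Lemma \ref{lem} is just a pointer to Lemma 3 of Sakai's paper \cite{Sak}, so what you have done is reprove the cited lemma from scratch, and your route (the Riemann--Roch dichotomy ``$M^{2}>0$ implies $M$ or $-M$ big'', plus the projection formula $\widetilde{L}\cdot f^{*}H=L\cdot H$ to rule out the wrong branch) is the standard one and is essentially Sakai's. All the delicate points check out: $\widetilde{L}\cdot H'\neq 0$ by Hodge index so exactly one of $h^{0}(m\widetilde{L})$, $h^{0}(K_{Y}-m\widetilde{L})$ survives for $m\gg 0$; $A\cdot f^{*}H>0$ because $f^{*}H$ is nef with $(f^{*}H)^{2}=H^{2}>0$, hence not numerically trivial; and $E\cdot f^{*}H\geq 0$ by nefness. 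The one step you use silently and could flag is that quadratic growth of $h^{0}(K_{Y}-m\widetilde{L})$ gives bigness of $-\widetilde{L}$ itself, i.e.\ that bigness can be tested after twisting by the fixed divisor $K_{Y}$; this is standard (a consequence of Kodaira's lemma) but deserves a word. A marginal simplification of your second step: rather than decomposing $-\widetilde{L}\equiv A+E$, observe directly that a big class meets any nef class with positive self-intersection strictly positively, or alternatively push forward: sections inject under $f_{*}$, so $-\widetilde{L}$ big would make $-L$ big, which together with $L$ big contradicts $L\cdot H>0$.
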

\begin{proof}
Lemma 3 in \cite{Sak}.
\end{proof}
\begin{proposition}\label{pro}
Let $f:\widetilde{X}\rightarrow X$ be a birational morphism between smooth projective surfaces. Let $\widetilde{D}$ be a divisor on $\widetilde{X}$ such that $\widetilde{D}^{2}>0$. Suppose there is a divisor $\widetilde{E}$ which satisfies the conclusions of Theorem \ref{sak} and let $D:=f_{*}\widetilde{D}$, $E:=f_{*}\widetilde{E}$ and $\alpha:=D^{2}-\widetilde{D}^{2}$. If $D$ is nef and $E$ effective we have
\bdism
0\leq D\cdot E <\alpha/2.
\edism
\begin{proof}
See Proposition 2 in \cite{Sak}.
\end{proof}
\end{proposition}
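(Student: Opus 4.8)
The plan is to pull everything back to $\widetilde{X}$, where the hypothesis $\widetilde{D}^{2}>0$ can be exploited, and to reduce the assertion to a single scalar inequality. Since $f$ is a birational morphism of smooth surfaces it factors as a composition of blow-ups, so the $f$-exceptional divisors span a negative definite lattice orthogonal to $f^{*}\mathrm{Pic}(X)$. Writing $\widetilde{D}=f^{*}D+N$ and $\widetilde{E}=f^{*}E+M$ with $N$ and $M$ supported on the exceptional locus, the projection formula gives $f^{*}D\cdot N=f^{*}D\cdot M=0$ and $(f^{*}D)^{2}=D^{2}$, whence $\alpha=D^{2}-\widetilde{D}^{2}=-N^{2}\geq 0$ and $D^{2}=\widetilde{D}^{2}+\alpha>\alpha$. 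I would also record $D\cdot E=(f^{*}D)\cdot(f^{*}E)=(f^{*}D)\cdot\widetilde{E}$. The lower bound is then immediate: $f^{*}D$ is nef and $\widetilde{E}$ is effective, so $D\cdot E=(f^{*}D)\cdot\widetilde{E}\geq 0$.

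For the upper bound I would assemble three inequalities. First, condition (1) says $\widetilde{D}-2\widetilde{E}$ is big, hence pseudoeffective; intersecting with the nef class $f^{*}D$ and using $(f^{*}D)\cdot\widetilde{D}=D^{2}$ and $(f^{*}D)\cdot\widetilde{E}=D\cdot E$ yields $D\cdot E\leq D^{2}/2$. Second, I would expand condition (2): since $(\widetilde{D}-\widetilde{E})\cdot\widetilde{E}=(D-E)\cdot E+(N-M)\cdot M\leq 0$, completing the square in the negative definite exceptional lattice gives $(D-E)\cdot E\leq M^{2}-N\cdot M=(M-\tfrac{1}{2}N)^{2}-\tfrac{1}{4}N^{2}\leq\tfrac{1}{4}\alpha$, that is $D\cdot E-E^{2}\leq\alpha/4$. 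Third, since $D$ is nef with $D^{2}>0$, the Hodge index theorem gives $(D\cdot E)^{2}\geq D^{2}\cdot E^{2}$, so $E^{2}\leq(D\cdot E)^{2}/D^{2}$.

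Combining the last two, and writing $x=D\cdot E$ and $s=D^{2}$, I obtain the single inequality
\bdism
x-\frac{x^{2}}{s}\leq\frac{\alpha}{4},
\edism
subject to $0\leq x\leq s/2$ from the first bound and $s>\alpha$ from $\widetilde{D}^{2}>0$. The function $g(x)=x-x^{2}/s$ is increasing on $[0,s/2]$, and $g(\alpha/2)=\tfrac{\alpha}{2}-\tfrac{\alpha^{2}}{4s}=\tfrac{\alpha}{4}(2-\alpha/s)>\alpha/4$ precisely because $s>\alpha$; hence $x\geq\alpha/2$ would force $g(x)\geq g(\alpha/2)>\alpha/4$, contradicting the displayed inequality. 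We conclude $D\cdot E<\alpha/2$.

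The \emph{main obstacle} is sharpness. The naive argument (bigness tested against the nef class) only delivers $D\cdot E\leq D^{2}/2$, which is weaker than the claim by exactly $\widetilde{D}^{2}/2$. Recovering the sharp constant $\alpha/2$ forces one to feed in condition (2) through the negative definiteness of the exceptional lattice, and then to convert that linear estimate into the quadratic Hodge bound, the two combining only because $\widetilde{D}^{2}>0$. I would finally note that strictness genuinely requires $\alpha>0$: when $f$ contracts nothing meeting the relevant divisors the same computation only yields $D\cdot E=0$, but in the intended application $f$ is a nontrivial modification and $\alpha>0$.
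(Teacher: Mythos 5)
Your argument is correct, and it supplies what the paper deliberately omits: the paper's entire ``proof'' of Proposition \ref{pro} is the citation of Proposition 2 in \cite{Sak}, so there is no in-text argument to compare against, and your reconstruction is a legitimate self-contained substitute in the same spirit as Sakai's (pull back to the smooth model, exploit negative definiteness of the exceptional lattice, finish with Hodge index). Every step checks out: writing $\widetilde{D}=f^{*}D+N$, $\widetilde{E}=f^{*}E+M$ with $N,M$ exceptional is valid because $f_{*}$ kills $\widetilde{D}-f^{*}D$; the projection formula gives $\alpha=-N^{2}\geq 0$, $s:=D^{2}=\widetilde{D}^{2}+\alpha>\alpha$, and $x:=D\cdot E=(f^{*}D)\cdot\widetilde{E}$; testing the big class $\widetilde{D}-2\widetilde{E}$ against the nef class $f^{*}D$ gives $x\leq s/2$; expanding condition (2) and completing the square, $(D-E)\cdot E\leq M^{2}-N\cdot M=(M-\tfrac{1}{2}N)^{2}-\tfrac{1}{4}N^{2}\leq\alpha/4$ is correct (negative definiteness applies to $\qq$-coefficients); Hodge index with $D$ nef, $D^{2}>0$ gives $E^{2}\leq x^{2}/s$; and the monotonicity of $g(x)=x-x^{2}/s$ on $[0,s/2]$ together with $g(\alpha/2)>\alpha/4$ (equivalent to $s>\alpha$) closes the argument. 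Two small refinements. First, the lower bound $D\cdot E\geq 0$ follows already from $D$ nef and $E$ effective on $X$, so it does not depend on effectivity of $\widetilde{E}$ itself. Second, your closing caveat about $\alpha=0$ can be eliminated rather than deferred to ``the intended application'': if $\alpha=0$ then $N=0$, your chain forces $x=0$, then $E^{2}=0$ from the two quadratic bounds, hence $E\equiv 0$ and so $E=0$ since it is effective; thus $\widetilde{E}=M$ is purely exceptional and condition (2) reads $-M^{2}\leq 0$, which with negative definiteness forces $M=0$. Since the divisor produced by Theorem \ref{sak} is nonzero (its proof shows $\left\lfloor N\right\rfloor>0$), the hypotheses of Proposition \ref{pro} are simply incompatible with $\alpha=0$, and the strict inequality holds unconditionally.
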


\section{Main Theorem}
We can now prove the main result of the paper.
\begin{proof}[Proof of Theorem \ref{teo}]
As mentioned before, Theorem \ref{sak} can be easily proved using Bogomolov's instability, see \cite{Sak} p. 307. \\
We now want to show that Theorem \ref{sak} implies Bogomolov's theorem. Since the inequality in Theorem \ref{bog} is invariant under twisting with a line bundle we can assume that $V$ is globally generated, $\det(V)$ is ample and $c_{2}(V)>0$. Taking a general section $s$ of $V$ we get the following exact sequence 
\bdism
0\rightarrow \oo_{X} \rightarrow V \rightarrow L\otimes I_{Z} \rightarrow 0, 
\edism
where $L:=\det(V)$ and $Z$ is the zero locus of $s$. Then we have $c_{2}(V)=|Z|$, the length of $Z$. \\
Since $V$ is locally free, the above extension is nontrivial and then
\bdism
H^{1}(X,\oo_{X}(K_{X}+L)\otimes I_{Z})\neq 0.
\edism
Let $\pi:Y\rightarrow X$ be the blow up of $X$ at all points in $Z$. Let $E_{j}$ be the exceptional curve over $x_{j}\in Z$, then
\bdism
H^{1}(Y,\oo_{Y}(K_{Y}+\pi^{*}L-2\sum_{j} E_{j}))=H^{1}(X,\oo_{X}(K_{X}+L)\otimes I_{Z})\neq 0.
\edism 
Define $\widetilde{L}:=\pi^{*}L-2\sum_{j} E_{j}$. Thus, we have
\bdism
\widetilde{L}^{2}=(\pi^{*}L)^{2}+4\sum_{j} E_{j}^{2}=c_{1}^{2}(V)-4c_{2}(V)>0
\edism
so $\widetilde{L}$ is big by Lemma \ref{lem}. \\
By applying Theorem \ref{sak} we get an effective divisor $\widetilde{E}_{s}$ such that
\begin{enumerate}
\item $\widetilde{L}-2\widetilde{E}_{s}$ is big;
\item $(\widetilde{L}-\widetilde{E}_{s})\cdot \widetilde{E}_{s}\leq 0$.
\end{enumerate}
Note that $\widetilde{E}_{s}$ depends on the section $s$ that we choose at the beginning. Let $E_{s}:=\pi_{*}\widetilde{E}_{s}$.
We want to show that, for any $s$, $E_{s}$ passes through at least one point of $Z$. Let $\widetilde{E}_{s}:=\pi^{*}E_{s}+\sum a_{i} E_{i}$, where $E_{i}$ are the exceptional divisors. It suffices to show that there exists an index $i$ such that $a_{i}<0$. Write $\widetilde{L}-\widetilde{E}_{s}=\pi^{*}W_{s}-\sum (a_{i}+2)E_{i}$ where $W_{s}:=L-E_{s}$. Thus by (2) we have 
\bdism
E_{s}\cdot W_{s}+\sum_{i}a_{i}(a_{i}+2)\leq 0.
\edism 
Then if we show that $E_{s}\cdot W_{s}>0$, we must have a negative $a_{i}$ and then $x_{i}\in \supp(E_{s})$. By construction $L=E_{s}+W_{s}$, $L\cdot E_{s}>0$ and 
\bdism
L\cdot W_{s}=(L-2E_{s})\cdot L+L\cdot E_{s}=(\widetilde{L}-2\widetilde{E}_{s})\cdot \pi^{*}L+L\cdot E_{s}> 0
\edism
by $(1)$. From the Hodge index theorem we get $E_{s}\cdot W_{s}>0$. \\
Now we need a result in \cite{Fer}, called the uniform multiplicity property. See also \cite{Laz1}.
\begin{lemma}
Choosing $s$ and $E_{s}$ generally we can assume that the multiplicity of $E_{s}$ at every point of $Z$ is the same.
\end{lemma}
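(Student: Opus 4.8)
The plan is to exploit the symmetry of the construction under the monodromy that permutes the points of $Z$ as the section $s$ varies, using that $V$ is globally generated to guarantee this symmetry is as large as possible. Write $d := c_{2}(V)$ and let $B \subseteq H^{0}(X,V)$ be the dense open locus of sections whose zero scheme is reduced of length $d$. First I would form the incidence variety
\bdism
\mathcal{I} := \{(s,x) \in B \times X : s(x)=0\},
\edism
together with its two projections $p\colon \mathcal{I}\to B$ and $q\colon \mathcal{I}\to X$. The map $p$ is finite (it is the restriction of a proper projection, and is quasi-finite on $B$) of degree $d$, and its fibre over $s$ is exactly the set of points of $Z=Z(s)$. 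The key structural fact is that $\mathcal{I}$ is irreducible, which I would see through $q$ instead: since $V$ is globally generated, the evaluation map $H^{0}(X,V)\to V\otimes k(x)\cong\cc^{2}$ is surjective for every $x$, so the fibre $q^{-1}(x)=\{s : s(x)=0\}$ is an open subset of a linear subspace of codimension $2$, in particular irreducible of constant dimension. As $X$ is irreducible and $q$ is surjective, $\mathcal{I}$ is irreducible.

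Next I would package the multiplicities into a single function on $\mathcal{I}$. For $(s,x)\in\mathcal{I}$ set
\bdism
\mu(s,x) := \operatorname{mult}_{x}(E_{s}),
\edism
the multiplicity at $x$ of the divisor $E_{s}=\pi_{*}\widetilde{E}_{s}$ produced by Theorem \ref{sak}. The heart of the argument is that $\mu$ is a constructible, integer-valued function: as $s$ ranges over $B$ the blow-up $\pi$, the divisor $\widetilde{L}$, and hence, through the construction in the proof of Theorem \ref{sak}, the divisor $\widetilde{E}_{s}$ all vary in algebraic families, so the multiplicity is locally constant on a dense open subset of $\mathcal{I}$. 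Since a constructible $\zz$-valued function on an irreducible variety is constant on some dense open set, there is a dense open $U\subseteq\mathcal{I}$ and an integer $m$ with $\mu\equiv m$ on $U$.

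Finally I would descend this back to a statement about a single general section. Because $\mathcal{I}\setminus U$ has dimension strictly less than $\dim B$ and $p$ is finite, its image $p(\mathcal{I}\setminus U)$ has dimension strictly less than $\dim B$, hence is a proper closed subset of the irreducible variety $B$. For $s$ in the complementary dense open set the entire fibre $p^{-1}(s)=\{(s,x) : x\in Z\}$ is therefore contained in $U$, which says precisely that $E_{s}$ has multiplicity $m$ at every point of $Z$. This is the desired uniform multiplicity, and choosing $s$ together with the associated $E_{s}$ in this dense open set proves the lemma.

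The step I expect to be the main obstacle is the constructibility of $\mu$, that is, controlling how $E_{s}$ varies with $s$: the divisor $E_{s}$ is built from the Zariski decomposition of $\widetilde{L}$, and Zariski decompositions need not vary continuously, so their combinatorial type can jump along proper closed subsets of $B$. I would handle this by restricting at the outset to the dense open locus of $B$ on which the numerical and combinatorial data of the decomposition are constant; equivalently, one may phrase the whole argument through monodromy, noting that the assignment $s\mapsto E_{s}$ is single-valued since the Zariski decomposition is unique, so transporting around a loop returns $E_{s}$ to itself while permuting the points of $Z$ by the corresponding element of the monodromy group. Irreducibility of $\mathcal{I}$ makes this group transitive on the $d$ points, which again forces all the multiplicities to coincide.
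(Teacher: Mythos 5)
Your skeleton is the standard one, and in fact it coincides with the argument in the sources this paper relies on: the paper does not prove the lemma itself, but quotes it as the \emph{uniform multiplicity property} from \cite{Fer} (see also \cite{Laz1}). The incidence variety $\mathcal{I}\subset B\times X$, its irreducibility via the codimension-two linear fibers of $q$ (which is where global generation of $V$ enters), the finiteness of the degree-$c_{2}(V)$ projection $p$, and the generic constancy of a constructible $\zz$-valued function on an irreducible variety are exactly the ingredients of the cited proof, and those parts of your write-up are correct.

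The genuine gap is the one you flag yourself: constructibility of $\mu$, i.e.\ that $s\mapsto E_{s}$ can be \emph{chosen} so as to vary algebraically over a dense open subset (or a generically finite cover) of $B$. Neither of your two patches closes it as stated. Restricting at the outset to ``the dense open locus of $B$ on which the numerical and combinatorial data of the decomposition are constant'' presupposes precisely what needs proving, namely that the locus where the Zariski decomposition of $\widetilde{L}$ on the varying blow-ups $Y_{s}$ has a fixed combinatorial type is constructible with a dense stratum. And the monodromy variant rests on the claim that $s\mapsto E_{s}$ is single-valued ``since the Zariski decomposition is unique'', which is false for this construction: $E_{s}$ is not the negative part $N$ of the decomposition, but the divisor $D-D_{k}$ produced by the inductive collection process in the proof of Theorem \ref{sak}, which involves choices (which $E_{j_{k}}$ to add back, and in what order), and Theorem \ref{sak} is an existence statement only; moreover, even a genuinely single-valued assignment feeds into monodromy only if it is known to vary algebraically, or at least continuously, along loops in $B$ --- which is again the constructibility question. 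The standard repair is boundedness: by Proposition \ref{pro} any divisor satisfying (1) and (2) obeys $0\leq L\cdot E_{s}<2c_{2}(V)$ with $L$ ample, so the candidate divisors $E_{s}$ lie in finitely many bounded families of effective divisors on $X$ (and the blow-ups $Y_{s}$ fit into a single algebraic family over $B$, obtained by blowing up $X\times B$ along the universal zero scheme); one then extracts an irreducible component of the incidence variety of pairs $(s,E)$ satisfying (1) and (2) that dominates $B$, and runs your irreducibility and monodromy argument on that family. With that step supplied, your proof is complete and agrees with the arguments of \cite{Fer} and \cite{Laz1}.
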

Since for any $s$ exists $x\in Z$ such that $x\in \supp(E_{s})$, by the uniform multiplicity property, we can choose $s$ and $E_{s}$ generally such that $Z\subset \supp(E_{s})$. For simplicity, we denote this divisor by $E$. \\ 
$Z\subset \supp(E)$ implies that the multiplication by $E$ defines a map $\oo_{X}(L-E) \rightarrow \oo_{X}(L)\otimes I_{Z}$. Since the cohomology group in Corollary \ref{cor} vanishes this map lifts to an injective map $\oo_{X}(L-E) \rightarrow V$.
Thus, $\oo_{X}(L-E)$ is a subsheaf of $V$.\\
It remains to prove that $V$ is unstable. This is equivalent to showing: 
\bdism
(L-2E)^{2}>0, \quad \quad (L-2E)\cdot L>0.
\edism
For the first inequality we consider the following exact sequence
\bdism
0\rightarrow \oo_{X}(L-E) \rightarrow V \rightarrow \oo_{X}(E)\otimes I_{Z'} \rightarrow 0, 
\edism
for some zero dimensional scheme $Z'$. Then $c_{1}(V)=L$ and $c_{2}(V)=(L-E)\cdot E+|Z'|$ and by hypothesis we get 
\bdism
(L-2E)^{2}>4|Z'|>0.
\edism 
For the second one we note that
\bdism
\alpha=c_{1}^{2}(V)-c_{1}^{2}(V)+4c_{2}(V)=4c_{2}(V),
\edism
and Proposition \ref{pro} gives the following 
\bdism
L\cdot E< 2 c_{2}(V).
\edism
Then
\bdism
L^{2}>4c_{2}(V)>2L\cdot E. \qedhere
\edism
\end{proof}
\section*{Acknowledgments}
First I would like to express my gratitude to Professor J\'anos Koll\'ar for his constant support and many enlightening discussions. I also would like to thank Professor Robert Lazarsfeld, Luca Di Cerbo and the referee for constructive comments on the paper.

\noindent
{Princeton University, Princeton NJ 08544-1000}

\noindent{gdi@math.princeton.edu}

\end{document}